\documentclass[11pt]{amsart}

\usepackage[T1]{fontenc}
\usepackage[utf8]{inputenc}
\usepackage{lmodern}

\usepackage{amsmath,amssymb,amsfonts,amsthm}
\usepackage{mathtools}
\usepackage{enumitem}
\usepackage{microtype}
\usepackage{titlesec}
\usepackage{etoolbox}
\usepackage[colorlinks=true,
            linkcolor=blue,
            citecolor=blue,
            urlcolor=blue]{hyperref}

\usepackage[margin=1.15in]{geometry}

\titleformat{\section}[hang]
  {\normalfont\large\bfseries}
  {\thesection.}{0.65em}{}
\titlespacing*{\section}
  {0pt}{3.2ex plus 0.8ex minus 0.2ex}{1.2ex plus 0.2ex}

\titleformat{\subsection}[hang]
  {\normalfont\normalsize\bfseries}
  {\thesubsection.}{0.6em}{}
\titlespacing*{\subsection}
  {0pt}{2.2ex plus 0.6ex minus 0.2ex}{0.8ex plus 0.2ex}

\makeatletter
\def\@settitle{%
  \begin{center}%
    \baselineskip18\p@\relax
    \Large\bfseries
    \@title
  \end{center}%
}
\AtBeginDocument{\renewcommand{\uppercasenonmath}[1]{}}
\patchcmd{\@setauthors}
  {\MakeUppercase{\authors}}
  {\authors}
  {}{}
\patchcmd{\maketitle}
  {\@nx\MakeUppercase}
  {\@nx\@firstofone}
  {}{}

\makeatother

\theoremstyle{plain}
\newtheorem{theorem}{Theorem}[section]
\newtheorem{proposition}[theorem]{Proposition}
\newtheorem{lemma}[theorem]{Lemma}

\theoremstyle{definition}

\newtheorem{example}[theorem]{Example}

\theoremstyle{remark}
\newtheorem{remark}[theorem]{Remark}

\newcommand{\R}{\mathbb{R}}
\newcommand{\dd}{\,d}

\newcommand{\Sch}[2]{\left\{#1,#2\right\}}
\newcommand{\kaff}{\kappa_{\mathrm{aff}}}
\newcommand{\Jac}{J_{\Psi}}
\newcommand{\env}{\gamma}
\newcommand{\Ob}{S_h}
\newcommand{\Res}{R_h}
\newcommand{\Web}{\mathcal W}

\title[Schwarzian Residue in Tangent Lagrangian \(2\)-Webs]
{Schwarzian Residue of the Samuelson Obstruction in Tangent Lagrangian
\(2\)-Webs}

\author[\NoCaseChange{Yasuhiro Kurokawa}]{Yasuhiro Kurokawa}

\address{\textnormal{Department of Architecture, School of Architecture,
Shibaura Institute of Technology, 3-7-5 Toyosu, Koto-ku,
Tokyo 135-8548, Japan}}

\email{kurokawa@sic.shibaura-it.ac.jp}

\date{August 16, 2026}

\subjclass[2020]{%
53A60, 53A15, 53A20; 53D05, 53D12
}

\keywords{%
Lagrangian 2-webs, Samuelson condition, tangent webs,
Schwarzian derivative, equi-affine curvature, area condition, web geometry
}

\begin{document}

\begin{abstract}
The Samuelson condition, a classical area-ratio condition for planar
Lagrangian \(2\)-webs, is equivalent in local web coordinates \((u,v)\) to
\[
        \partial_u\partial_v\log|J_F(u,v)|=0,
\]
where \(F\) is the inverse web-coordinate map and \(J_F\) is its Jacobian.
For the tangent-line family
\[
        L_t:\quad y=tx+h(t),
\]
let \(\Psi(u,v)\), for \(u\neq v\), be the intersection map of \(L_u\)
and \(L_v\), write \(\Jac\) for its Jacobian, and set
\[
        \Ob(u,v)
        :=
        \partial_u\partial_v\log|\Jac(u,v)|.
\]
Near each diagonal point \((t_0,t_0)\) with \(h''(t_0)\neq0\), the
obstruction admits the decomposition
\[
        \Ob(u,v)
        =
        \frac{1}{(v-u)^2}
        +
        \Res(u,v),
\]
where \(\Res\) extends smoothly across the diagonal near
\((t_0,t_0)\).  We call \(\Res(t,t)\) the Schwarzian residue and compute
\[
\begin{aligned}
        \Res(t,t)
        &=
        \frac{h^{(4)}(t)}{3h''(t)}
        -
        \frac49
        \left(
                \frac{h'''(t)}{h''(t)}
        \right)^2                                      \\
        &=
        \frac12\Sch{\sigma}{t}
        =
        -\kaff(\sigma)
        \left(
                \frac{\dd\sigma}{\dd t}
        \right)^2.
\end{aligned}
\]
Here \(\Sch{\sigma}{t}\) denotes the Schwarzian derivative of
\(\sigma\) with respect to \(t\), where \(\sigma\) is the equi-affine
arclength parameter of the envelope
\(\gamma(t)=\bigl(-h'(t),\,h(t)-t h'(t)\bigr)\), and \(\kaff\) denotes
its equi-affine curvature, with the convention
\(\gamma_{\sigma\sigma\sigma}+\kaff\gamma_\sigma=0\).
Thus the universal pole accounts for the local failure of the Samuelson
condition, while the diagonal finite part carries affine-projective
information about the envelope.  We also derive the transformation law
of the Schwarzian residue under reparametrization of the tangent-line
parameter.
\end{abstract}
\maketitle

\section{Introduction}
The Samuelson condition is a classical area-ratio condition for planar
Lagrangian \(2\)-webs.  In \cite{KurokawaUniversal}, we showed that,
for a web generated by a one-parameter family of tangent lines, the
local failure of this condition near the coincidence locus of two
tangent lines is governed by a universal double pole.  The
factorization obtained there also shows that subtracting this pole
leaves a regularized field that extends smoothly to the diagonal.
The aim of the present paper is to compute its diagonal value, identify
its affine-projective meaning, and determine its transformation law
under a change of the tangent-line parameter.

We first recall the Samuelson condition and its canonical
connection-theoretic interpretation.  Let
\(\Omega\subset\mathbb R^2_{x,y}\) be a sufficiently small domain with
the standard symplectic, or area, form \(\omega=dx\wedge dy\), and let
\(\Web=(\mathcal F_1,\mathcal F_2)\) be a planar Lagrangian \(2\)-web
on \(\Omega\), that is, a pair of transverse Lagrangian foliations; see \cite{BlaschkeBol,Chern,Tabachnikov}.
Choose smooth first integrals \(u,v:\Omega\to\mathbb R\) with
\(du\wedge dv\neq0\).  After shrinking \(\Omega\), the map
\(\phi=(u,v):\Omega\to U\subset\mathbb R^2_{u,v}\) is a
diffeomorphism; write \(F=\phi^{-1}:U\to\Omega\).

Then
\[
        F^*\omega=J_F(u,v)\,du\wedge dv,
\]
where \(J_F\) denotes the Jacobian determinant of \(F\).  Since \(F\)
is a local diffeomorphism, \(J_F\) is nonzero, and \(|J_F|\) is the
unsigned area density in web coordinates.

The Samuelson condition requires that, after one cut in each web
direction, the products of the areas in the two opposite pairs of
subrectangles be equal.  In smooth web coordinates, this is equivalent
to the local factorization \(|J_F(u,v)|=a(u)b(v)\) for some positive
one-variable functions \(a\) and \(b\).  Equivalently,
\[
        \partial_u\partial_v\log|J_F(u,v)|=0.
\]
This characterization is established in the theory of Samuelson webs;
see \cite{CooperRussellSamuelson,FerraraUdriste}.  The use of
\(|J_F|\) makes these formulations independent of the orientation
choices for the web coordinates.  Thus the mixed logarithmic
derivative detects the local failure of the unsigned area density to
be multiplicatively separable.

\medskip
\noindent\textit{Connection-theoretic interpretation.}
The two coordinate foliations on \(U\), together with \(F^*\omega\),
define a bi-Lagrangian structure.  Let \(\nabla^H\) denote its Hess
connection, also called the canonical bi-Lagrangian connection, namely
the unique torsion-free connection preserving the two coordinate
foliations and satisfying \(\nabla^H(F^*\omega)=0\)
\cite{Hess,Tabachnikov}.  Equivalently, the bi-Lagrangian structure is
para-K\"ahler, and \(\nabla^H\) is the Levi-Civita connection of its
associated neutral metric
\cite{EtayoSantamaria,EtayoSantamariaTrias}.

In the frame \((\partial_u,\partial_v)\), the mixed terms satisfy
\(\nabla^H_{\partial_u}\partial_v=
\nabla^H_{\partial_v}\partial_u=0\), while
\[
        \nabla^H_{\partial_u}\partial_u
        =
        \bigl(\partial_u\log|J_F|\bigr)\partial_u,
        \qquad
        \nabla^H_{\partial_v}\partial_v
        =
        \bigl(\partial_v\log|J_F|\bigr)\partial_v.
\]
Indeed, preservation of the two line fields and torsion-freeness force
the mixed terms to vanish, while \(\nabla^H(F^*\omega)=0\) determines
the remaining coefficients.

Set \(S_F(u,v):=\partial_u\partial_v\log|J_F(u,v)|\).  With the
curvature convention
\[R^H(X,Y)Z=\nabla^H_X\nabla^H_YZ-\nabla^H_Y\nabla^H_XZ-
\nabla^H_{[X,Y]}Z,\] a direct calculation gives
\[
        R^H(\partial_u,\partial_v)\partial_u
        =-S_F\partial_u,
        \qquad
        R^H(\partial_u,\partial_v)\partial_v
        =S_F\partial_v.
\]
Equivalently, relative to the same frame, the curvature \(2\)-form
matrix is
\[
        \mathcal R^H
        =
        S_F\,du\wedge dv
        \begin{pmatrix}
                -1&0\\
                0&1
        \end{pmatrix}.
\]
Thus the intrinsic Samuelson obstruction form is
\(S_F\,du\wedge dv\).  In particular, the Samuelson condition is
equivalent to flatness of the Hess connection.  This is the two-dimensional coordinate form of
Tabachnikov's canonical curvature construction.

\medskip

We now specialize to tangent webs.  Let
\(L_t:y=tx+h(t)\), \(t\in I\), be a smooth one-parameter family of
tangent lines.  For \(u\neq v\), let
\(\Psi(u,v)=(x(u,v),y(u,v))\) be the intersection point of \(L_u\)
and \(L_v\).  On a suitable local ordered branch, the parameters
\(u\) and \(v\) serve as web coordinates for the tangent \(2\)-web,
and \(\Psi\) is the corresponding inverse web-coordinate map.
Writing \(\Jac\) for the Jacobian determinant of \(\Psi\), set
\[
        \Ob(u,v)
        :=
        \partial_u\partial_v\log|\Jac(u,v)|.
\]
By the preceding discussion, \(\Ob\) is the scalar coefficient in the
curvature \(2\)-form matrix of the Hess connection on that branch.

Near a diagonal point \((t_0,t_0)\) satisfying \(h''(t_0)\neq0\), the
smooth extension of \(\Jac\) vanishes to first order along the
diagonal.  Consequently,
\[
        \Ob(u,v)
        =
        \frac1{(v-u)^2}+O(1).
\]
Since \(\Psi\) ceases to be a local diffeomorphism on the diagonal,
the corresponding Hess connection is defined only off the diagonal.
The problem is therefore to isolate the universal singular part of
the curvature coefficient, prove that the resulting regularized field
extends smoothly to the diagonal, and identify the geometric meaning
of its diagonal value.
The following two theorems give the main results of the paper.

\begin{theorem}[Renormalization and diagonal finite part]
\label{thm:finitepart}
Let \(I\subset\mathbb R\) be an interval, let
\(h\in C^\infty(I)\), and consider the family
\(L_t:y=tx+h(t)\).  Let \(\Psi(u,v)\) be the intersection map of
\(L_u\) and \(L_v\), and set
\(\Ob(u,v):=\partial_u\partial_v\log|\Jac(u,v)|\).  If
\(h''(t_0)\neq0\), then, near \((t_0,t_0)\),
\[
        \Ob(u,v)
        =
        \frac1{(v-u)^2}+\Res(u,v),
\]
where \(\Res\) extends smoothly to the diagonal.  Its diagonal value is
\[
        \Res(t,t)
        =
        \frac{h^{(4)}(t)}{3h''(t)}
        -
        \frac49
        \left(
                \frac{h'''(t)}{h''(t)}
        \right)^2.
\]
\end{theorem}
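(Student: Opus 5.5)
The plan is to compute \(\Jac\) in closed form in terms of divided differences of \(h\). Once that is done, the pole splits off exactly and \(\Res\) becomes a sum of two mixed log-derivatives of smooth, nonvanishing functions.

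\textbf{Step 1: factorize \(\Jac\).} Write \(D(u,v)=h[u,v]=(h(v)-h(u))/(v-u)\) for the first divided difference. Solving \(ux+h(u)=vx+h(v)\) gives
\[
\Psi(u,v)=\bigl(-D,\;h(u)-uD\bigr).
\]
Differentiating, the terms \(uD_uD_v\) cancel and
\[
\Jac=D_v\bigl(h'(u)-D\bigr).
\]
From the identity \(D_u=(D-h'(u))/(v-u)\) we get \(h'(u)-D=-(v-u)D_u\). Hence
\[
\Jac(u,v)=-(v-u)\,D_u\,D_v.
\]

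\textbf{Step 2: identify the factors and split off the pole.} By the standard rules for differentiating divided differences,
\[
D_u=h[u,u,v]=\int_0^1(1-s)\,h''\bigl(u+s(v-u)\bigr)\,ds,
\qquad
D_v=h[u,v,v].
\]
Both are smooth across the diagonal and both equal \(h''(t)/2\) at \((t,t)\). So near \((t_0,t_0)\) they are nonzero, because \(h''(t_0)\neq0\). Taking \(\log|\cdot|\) and applying \(\partial_u\partial_v\), the factor \(|v-u|\) contributes exactly \(\partial_u\partial_v\log|v-u|=1/(v-u)^2\). Therefore
\[
\Res=\partial_u\partial_v\log|G|+\partial_u\partial_v\log|H|,
\qquad G=h[u,u,v],\quad H=h[u,v,v].
\]
This \(\Res\) is manifestly smooth near the diagonal. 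This step also settles the sign and branch issues, since only absolute values enter.

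\textbf{Step 3: evaluate on the diagonal.} Use
\[
\partial_u\partial_v\log|G|=\frac{G_{uv}}{G}-\frac{G_uG_v}{G^2}.
\]
Differentiating with respect to a node that appears with multiplicity \(m\) multiplies by \(m\) and inserts that node once more. Thus
\[
G_u=2h[u,u,u,v],\quad G_v=h[u,u,v,v],\quad G_{uv}=2h[u,u,u,v,v].
\]
On the diagonal \(h[t^{(k+1)}]=h^{(k)}(t)/k!\), so
\[
G=\frac{h''}{2},\quad G_u=\frac{h'''}{3},\quad G_v=\frac{h'''}{6},\quad G_{uv}=\frac{h^{(4)}}{12}.
\]
This gives
\[
\frac{h^{(4)}}{6h''}-\frac29\left(\frac{h'''}{h''}\right)^2.
\]
By the symmetry \(u\leftrightarrow v\), \(H\) gives the same value. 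Summing the two contributions yields the stated formula.

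\textbf{Main obstacle.} The only step requiring care is the bookkeeping of derivatives of confluent divided differences. If the rules above are in doubt, I would instead Taylor-expand the integral representation of \(G\) to second order around \((t,t)\) and read off the coefficients directly.
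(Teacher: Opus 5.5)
Your proof is correct and follows the paper's route. Your factorization $J_\Psi=-(v-u)D_uD_v$ is exactly the paper's $J_\Psi=d\,Q$ with $Q=-\alpha\beta$, where you identify the Hadamard quotients explicitly as $\alpha=h[u,u,v]$ and $\beta=h[u,v,v]$, and removing $\log|v-u|$ is the same pole subtraction. The only difference is in the diagonal evaluation. You use the confluent divided-difference derivative rules together with the $u\leftrightarrow v$ symmetry, whereas the paper Taylor-expands $\log|Q|$ to second order in the coordinates $(u,d)$. Both give $\frac{h^{(4)}}{3h''}-\frac49\bigl(\frac{h'''}{h''}\bigr)^2$.
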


\begin{theorem}[Schwarzian and equi-affine curvature interpretation]
\label{thm:schwarzian}
Under the hypotheses of Theorem~\ref{thm:finitepart}, let
\(\gamma(t)=\bigl(-h'(t),\,h(t)-t h'(t)\bigr)\) be the envelope and
let \(\sigma\) be its equi-affine arclength parameter, so that
\(\dd\sigma=|h''(t)|^{2/3}\,\dd t\).  Then
\[
        \Res(t,t)
        =
        \frac12\Sch{\sigma}{t}.
\]
Equivalently,
\[
        \Res(t,t)
        =
        -\kaff(\sigma)
        \left(
                \frac{\dd\sigma}{\dd t}
        \right)^2,
\]
where \(\kaff\) denotes the equi-affine curvature of the envelope,
with the convention
\(\gamma_{\sigma\sigma\sigma}+\kaff\gamma_\sigma=0\).
\end{theorem}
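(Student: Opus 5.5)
The statement to prove is Theorem~\ref{thm:schwarzian}. We take Theorem~\ref{thm:finitepart} as given, in particular its formula for \(\Res(t,t)\), and reduce everything to one-variable identities in \(t\).

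\textbf{Step 1: the Schwarzian.} Set \(\phi(t)=\dd\sigma/\dd t=|h''(t)|^{2/3}\). Since \(h''\) has constant sign near \(t_0\),
\[
\frac{\phi'}{\phi}=\frac23\,\frac{h'''}{h''}.
\]
Using \(\Sch{\sigma}{t}=(\phi'/\phi)'-\tfrac12(\phi'/\phi)^2\), we get
\[
\Sch{\sigma}{t}
=\frac23\left(\frac{h^{(4)}}{h''}-\frac{(h''')^2}{(h'')^2}\right)-\frac29\frac{(h''')^2}{(h'')^2}
=\frac23\frac{h^{(4)}}{h''}-\frac89\left(\frac{h'''}{h''}\right)^2 .
\]
Half of this is exactly the diagonal value in Theorem~\ref{thm:finitepart}. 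This proves the first identity.

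\textbf{Step 2: equi-affine curvature.} It remains to show \(\tfrac12\Sch{\sigma}{t}=-\kaff\,\phi^2\).

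First compute the derivatives of the envelope:
\[
\gamma'=-h''\,(1,t),\qquad \gamma''=-h'''(1,t)-h''(0,1).
\]
Hence \(\det(\gamma',\gamma'')=(h'')^2\). This confirms \(\dd\sigma=|\det(\gamma',\gamma'')|^{1/3}\,\dd t=|h''|^{2/3}\,\dd t\). If the sign convention requires it, replace \(t\mapsto -t\) or use the absolute value; the orientation does not enter \(\kaff\).

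Next I would use the standard fact for any regular parametrization \(\gamma(t)\) with \(\det(\gamma',\gamma'')=\phi^3\). Differentiating \(\det(\gamma',\gamma'')=\phi^3\) gives \(\det(\gamma',\gamma''')=3\phi^2\phi'\). So \(\gamma'''=a\gamma''+b\gamma'\) with
\[
a=3\phi'/\phi,\qquad b=-\det(\gamma'',\gamma''')/\phi^3 .
\]
I would avoid computing \(b\) directly. Instead, convert to \(\sigma\) via
\[
\gamma_\sigma=\gamma'/\phi,\qquad
\gamma_{\sigma\sigma}=\bigl(\gamma''-(\phi'/\phi)\gamma'\bigr)/\phi^2,
\]
and differentiate once more. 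The \(\gamma_{\sigma\sigma}\) component of \(\gamma_{\sigma\sigma\sigma}\) vanishes automatically, because \(\det(\gamma_\sigma,\gamma_{\sigma\sigma})=1\). The coefficient of \(\gamma_\sigma\) gives \(\kaff\).

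Concretely, it is cleaner to use the classical characterization. The components \(x_1,x_2\) of \(\gamma_\sigma\) solve \(f''+\kaff f=0\) in \(\sigma\). Alternatively, one can note directly that \(\gamma_\sigma\) is proportional to \((1,t)\):
\[
\gamma_\sigma=-\operatorname{sgn}(h'')\,\phi^{1/2}\,(1,t),\qquad\text{since } h''/\phi=\pm\phi^{1/2}.
\]
So \(f_1=\phi^{1/2}\) and \(f_2=t\phi^{1/2}\), as functions of \(\sigma\), both solve \(f_{\sigma\sigma}+\kaff f=0\).

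Their ratio \(f_2/f_1=t\) is therefore a ratio of two solutions of this Hill equation. The classical Schwarzian lemma then gives
\[
\Sch{t}{\sigma}=2\kaff .
\]
Apply the chain rule \(\Sch{\sigma}{t}=-(\dd\sigma/\dd t)^2\,\Sch{t}{\sigma}\), which is the inversion formula. This yields
\[
\tfrac12\Sch{\sigma}{t}=-\kaff\,\phi^2,
\]
which is the second identity.

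\textbf{Main obstacle.} The only delicate points are sign and normalization conventions:
\begin{itemize}
\item the sign of \(h''\) and the resulting orientation of \(\sigma\);
\item the factor \(2\) in \(\Sch{f_2/f_1}{\sigma}=2\kaff\) for \(f''+\kaff f=0\).
\end{itemize}
I would check both on an example such as \(h=t^2\). There the envelope is a parabola, so \(\kaff=0\) and \(\phi\) is constant, and both sides vanish. For a nontrivial normalization check, use an ellipse-type example in which \(\kaff\) is a positive constant.
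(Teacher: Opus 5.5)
Your proof is correct and follows essentially the same route as the paper. You compute \(\Sch{\sigma}{t}\) from \(\sigma''/\sigma'=\tfrac23 h'''/h''\) and match half of it with the diagonal formula of Theorem~\ref{thm:finitepart}; you then realize the slope \(t\) as the ratio of the components of \(\gamma_\sigma\), which solve \(z_{\sigma\sigma}+\kaff z=0\), and invert via the Schwarzian chain rule. The differences are cosmetic: the paper derives \(\Sch{t}{\sigma}=2\kaff\) in line from \(t_\sigma=T_1^{-2}\) (Wronskian equal to one) rather than only citing the classical lemma, and your abandoned detour through \(\gamma'''=a\gamma''+b\gamma'\) is not needed.
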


We call \(\Res(t,t)\) the \emph{Schwarzian residue} of the tangent web
at \(t\).  In the Hess-connection interpretation above,
Theorem~\ref{thm:finitepart} shows that the scalar coefficient of the
curvature \(2\)-form has the universal principal part
\(1/(v-u)^2\), while the regularized field has diagonal value
\(\Res(t,t)\).  Theorem~\ref{thm:schwarzian} identifies this value with
one half of the Schwarzian derivative of the equi-affine arclength
parameter with respect to the tangent slope and, equivalently, with the
equi-affine curvature expression
\(-\kaff(\sigma)(\dd\sigma/\dd t)^2\).
The curvature--Schwarzian identity follows from the equi-affine Frenet
equation and the classical relation between ratios of solutions of a
second-order linear equation and the Schwarzian derivative.  In
Section~6, we derive the transformation law of the Schwarzian residue
under a local reparametrization of the tangent-line parameter.

\section{Tangent intersection maps and the envelope}

Let $I\subset\R$ be an interval and let $h\in C^\infty(I)$.
We consider the one-parameter family of lines
\[
        L_t:\quad y=tx+h(t).
\]
For \(u\neq v\), the intersection point of \(L_u\) and \(L_v\) is
\[
        \Psi(u,v)=
        \left(
            \frac{h(v)-h(u)}{u-v},
            \frac{u h(v)-v h(u)}{u-v}
        \right).
\]
To regard \(\Psi\) as the inverse web-coordinate map, we choose a local
ordered branch on which \(\Psi\) is a diffeomorphism onto its image.
On this branch, we write
\(\Psi(u,v)=(x(u,v),y(u,v))\).

The envelope of the family is
\[
        \env(t)=\bigl(-h'(t),\,h(t)-t h'(t)\bigr).
\]
Indeed, differentiating the equation \(y=tx+h(t)\) with respect to \(t\)
gives
\(0=x+h'(t),\)
hence \(x=-h'(t)\) and \(y=h(t)-t h'(t)\).

By the standard smooth extension of divided differences, the formula
above for \(\Psi\) extends smoothly to the diagonal.  Denoting the
extension by the same symbol, we have
\[
        \Psi(t,t)=\gamma(t).
\]

Differentiating the envelope, we obtain
\[
        \env_t(t)
        =
        -h''(t)(1,t),
\]
and
\[
        \env_{tt}(t)
        =
        \bigl(-h'''(t),-h''(t)-t h'''(t)\bigr).
\]
Hence
\[
        \det(\env_t(t),\env_{tt}(t))=h''(t)^2.
\]
Thus the hypothesis \(h''(t)\neq0\) appearing in the main results is
precisely the equi-affine nondegeneracy condition for the envelope.

Define
\[
        A(u,v)
        :=
        h(v)-h(u)-h'(u)(v-u),
\]
and
\[
        B(u,v)
        :=
        h(u)-h(v)+h'(v)(v-u).
\]

\begin{lemma}[Jacobian factorization]
For $u\neq v$, the Jacobian of the intersection map is
\[
        \Jac(u,v)
        =
        -\frac{A(u,v)B(u,v)}{(v-u)^3}.
\]
\end{lemma}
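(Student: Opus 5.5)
The plan is to avoid differentiating the quotient formula for \(y(u,v)\) directly. Instead I will use the incidence relations that define \(\Psi\): for \(u\neq v\) the point \(\Psi(u,v)=(x,y)\) lies on both \(L_u\) and \(L_v\), so
\[
        y(u,v)=u\,x(u,v)+h(u),
        \qquad
        y(u,v)=v\,x(u,v)+h(v).
\]

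\textbf{Step 1: \(y\)-derivatives in terms of \(x\)-derivatives.} Differentiating the first identity in \(v\) gives \(y_v=u\,x_v\). Differentiating the second identity in \(u\) gives \(y_u=v\,x_u\). Substituting into the determinant yields
\[
        \Jac=x_u y_v-x_v y_u=u\,x_ux_v-v\,x_vx_u=(u-v)\,x_u x_v .
\]
This reduces the problem to computing the two partial derivatives of \(x\).

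\textbf{Step 2: the partials of \(x\).} Write \(x=-(h(v)-h(u))/(v-u)\). Using \(\partial_u(v-u)^{-1}=(v-u)^{-2}\) and \(\partial_v(v-u)^{-1}=-(v-u)^{-2}\), a one-line computation gives
\[
        x_u=\frac{h'(u)}{v-u}-\frac{h(v)-h(u)}{(v-u)^2}=-\frac{A(u,v)}{(v-u)^2},
        \qquad
        x_v=-\frac{h'(v)}{v-u}+\frac{h(v)-h(u)}{(v-u)^2}=-\frac{B(u,v)}{(v-u)^2}.
\]
The only care needed is to match signs against the definitions of \(A\) and \(B\).

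\textbf{Step 3: combine.} Inserting these into \(\Jac=(u-v)x_ux_v=-(v-u)x_ux_v\) gives
\[
        \Jac=-(v-u)\,\frac{A(u,v)B(u,v)}{(v-u)^4}=-\frac{A(u,v)B(u,v)}{(v-u)^3}.
\]

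There is no genuine obstacle here. The only place an error could slip in is the sign bookkeeping in Step 2 and in the factor \((u-v)=-(v-u)\), so I would double-check these, for instance on \(h(t)=t^2\). In that case \(A=B=(v-u)^2\) and \(\Psi(u,v)=\bigl(-(u+v),-uv\bigr)\), whose Jacobian is \(u-v\), in agreement with \(-(v-u)^4/(v-u)^3\).
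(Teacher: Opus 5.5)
Your proof is correct and is essentially the paper's argument: the paper likewise computes $x_u=-A/(v-u)^2$, $x_v=-B/(v-u)^2$, $y_u=-vA/(v-u)^2$, $y_v=-uB/(v-u)^2$ and then takes the determinant. The only difference is that you obtain $y_u=v\,x_u$ and $y_v=u\,x_v$ from the incidence relations $y=ux+h(u)$ and $y=vx+h(v)$ instead of differentiating the quotient formula for $y$, a tidy shortcut that also explains why the paper's $y$-partials are multiples of its $x$-partials.
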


\begin{proof}
Write $d=v-u$.  From the preceding formula for $\Psi=(x,y)$ one obtains
\[
        x_u=-\frac{A}{d^2},\qquad
        x_v=-\frac{B}{d^2},\qquad
        y_u=-\frac{vA}{d^2},\qquad
        y_v=-\frac{uB}{d^2}.
\]
Hence
\[
        \Jac=x_u y_v-x_v y_u
        =
        \frac{uAB-vAB}{d^4}
        =
        -\frac{AB}{d^3},
\]
which is the claimed factorization.
\end{proof}

\section{The universal pole and its renormalization}

Set
\(\Ob(u,v) := \partial_u\partial_v\log |\Jac(u,v)|.\)
The $2$-form $\Ob(u,v)\,du\wedge dv$ is the Samuelson obstruction form of the
tangent web; $\Ob$ is its coefficient in the coordinates $(u,v)$.

Let
\(
        d=v-u.
\)
When a function is regarded in the auxiliary coordinates $(u,d)$, we write
$\widehat{\partial}_u$ and $\widehat{\partial}_d$ for the corresponding
partial derivatives, while $\partial_u$ and $\partial_v$ remain the original
partial derivatives in the coordinates $(u,v)$.  Since $d=v-u$, the chain
rule gives
\[
        \partial_v=\widehat{\partial}_d,
        \qquad
        \partial_u=\widehat{\partial}_u-\widehat{\partial}_d.
\]
Write
\[
        \widetilde A(u,d):=A(u,u+d)
        =
        h(u+d)-h(u)-h'(u)d
\]
and
\[
        \widetilde B(u,d):=B(u,u+d)
        =
        h(u)-h(u+d)+h'(u+d)d.
\]

Differentiating with respect to $d$ in these auxiliary coordinates, we have
\[
        \widetilde A(u,0)=\widehat{\partial}_d\widetilde A(u,0)=0,
        \qquad
        \widehat{\partial}_d^2\widetilde A(u,0)=h''(u),
\]
and similarly
\[
        \widetilde B(u,0)=\widehat{\partial}_d\widetilde B(u,0)=0,
        \qquad
        \widehat{\partial}_d^2\widetilde B(u,0)=h''(u).
\]

By Hadamard's
lemma, there exist smooth functions $\alpha$ and $\beta$ such that
\[
        A(u,u+d)=d^2\alpha(u,d),
        \qquad
        B(u,u+d)=d^2\beta(u,d),
\]
with
\(\alpha(u,0)=\beta(u,0)=\frac12 h''(u).\)

For $d\neq0$, the Jacobian factorization gives
\[
        \Jac(u,u+d)
        =
        -\frac{A(u,u+d)B(u,u+d)}{d^3}
        =
        d\,Q(u,d),
\]
with
\(Q(u,d):=-\alpha(u,d)\beta(u,d).\)
Hence $Q$ is smooth and
\[
        Q(u,0)=-\frac14 h''(u)^2.
\]
In particular, if $h''(t_0)\neq0$, then $Q(t_0,0)\neq0$, and therefore
$Q$ is nonzero in a sufficiently small neighborhood of $(t_0,0)$.

\begin{proposition}[Smooth renormalization of the universal pole]
Let \(t_0\in I\) satisfy \(h''(t_0)\neq0\).  Then the function
\(\partial_u\partial_v\log|\Jac(u,v)|-1/(v-u)^2\), defined off the
diagonal near \((t_0,t_0)\), admits a smooth extension to a
neighborhood of \((t_0,t_0)\).  We denote this extension by \(\Res\)
and refer to it as the \emph{regularized field}.  For \(u\neq v\) in
this neighborhood,
\[
        \Ob(u,v)
        =
        \frac{1}{(v-u)^2}
        +
        \Res(u,v).
\]
\end{proposition}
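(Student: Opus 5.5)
The plan is to use the factorization \(\Jac(u,u+d)=d\,Q(u,d)\) established just above, with \(Q\) smooth and nonvanishing near \((t_0,0)\). Since \(Q(t_0,0)=-\tfrac14 h''(t_0)^2\neq0\), we can shrink to a neighborhood \(W\) of \((t_0,0)\) on which \(Q\) has constant sign. Off the diagonal we then have
\[
        \log|\Jac(u,v)|=\log|v-u|+\log|Q(u,v-u)|,
\]
and \(G(u,v):=\log|Q(u,v-u)|\) is smooth on the corresponding neighborhood of \((t_0,t_0)\), diagonal included.

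Next apply \(\partial_u\partial_v\) to each term separately.

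\textbf{The log term.} For \(u\neq v\), \(\partial_v\log|v-u|=1/(v-u)\). Differentiating in \(u\) gives \(\partial_u(v-u)^{-1}=(v-u)^{-2}\). This is exactly the universal pole, with the correct sign.

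\textbf{The smooth term.} \(\partial_u\partial_v G\) is smooth on the whole neighborhood. We can write it explicitly in the auxiliary coordinates: using \(\partial_v=\widehat\partial_d\) and \(\partial_u=\widehat\partial_u-\widehat\partial_d\),
\[
        \partial_u\partial_v G=(\widehat\partial_u\widehat\partial_d-\widehat\partial_d^2)\log|Q(u,d)|\Big|_{d=v-u}.
\]

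So for \(u\neq v\),
\[
        \Ob(u,v)-\frac1{(v-u)^2}=\partial_u\partial_v G(u,v),
\]
and the right-hand side is the desired smooth extension \(\Res\). It is unique by continuity, because the diagonal has empty interior.

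There is no serious obstacle. The only points needing care are the choice of neighborhood on which \(Q\neq0\), so that \(\log|Q|\) is smooth, and tracking the sign of the pole through the chain rule. The smoothness of \(\alpha\) and \(\beta\) comes from Hadamard's lemma with integral remainder, which is already granted in the paper.
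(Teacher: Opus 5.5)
Your proposal is correct and follows the paper's proof essentially step for step. Like the paper, you use \(\Jac(u,u+d)=d\,Q(u,d)\) with \(Q\) nonvanishing near \((t_0,0)\) to split \(\log|\Jac|=\log|v-u|+\log|Q|\), check that \(\partial_u\partial_v\log|v-u|=1/(v-u)^2\), and take \(\Res=(\widehat{\partial}_u-\widehat{\partial}_d)\widehat{\partial}_d\log|Q(u,d)|\) as the smooth extension.
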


\begin{proof}
In the coordinates \((u,d)\), where \(d=v-u\), we have
\(\Jac(u,u+d)=d\,Q(u,d)\), where
\(Q(u,0)=-\frac14h''(u)^2\).  Since \(Q\) is smooth and nonvanishing
near \((t_0,0)\), the function
\(G(u,d):=\log|Q(u,d)|\) is smooth there, and
\(\log|\Jac(u,u+d)|=\log|d|+G(u,d)\).

By the chain-rule identities displayed above,
\[
        \partial_u\partial_v\log|d|
        =
        (\widehat{\partial}_u-\widehat{\partial}_d)
        \widehat{\partial}_d\log|d|
        =
        (\widehat{\partial}_u-\widehat{\partial}_d)\frac1d
        =
        \frac1{d^2}.
\]
On the other hand, since \(G\) is smooth near \(d=0\), so is
\((\widehat{\partial}_u-\widehat{\partial}_d)
\widehat{\partial}_dG\).  Hence, in the coordinates \((u,d)\), the
required smooth extension is given by
\[
        \Res(u,u+d)
        =
        (\widehat{\partial}_u-\widehat{\partial}_d)
        \widehat{\partial}_dG(u,d).
\]
For \(d\neq0\), this gives
\(\Ob(u,u+d)=1/d^2+\Res(u,u+d)\).  Thus the stated decomposition holds
off the diagonal, while \(\Res\) is smooth across it.  This proves the
proposition and the smooth-extension assertion in
Theorem~\ref{thm:finitepart}.
\end{proof}

\medskip
\noindent\textit{Interpretation via the Hess connection.}
On the chosen local ordered branch, the coordinate
foliations, together with the symplectic form
\(\Psi^*\omega=\Jac(u,v)\,du\wedge dv\), define a bi-Lagrangian
structure.  Relative to the frame \((\partial_u,\partial_v)\), the
curvature \(2\)-form matrix of its Hess connection is
\[
        \mathcal R^H
        =
        \left(
                \frac1{(v-u)^2}+\Res(u,v)
        \right)
        du\wedge dv
        \begin{pmatrix}
                -1&0\\
                0&1
        \end{pmatrix}.
\]
Thus the universal double pole \(1/(v-u)^2\) is the principal part of
the scalar coefficient in this curvature matrix as the diagonal is
approached within the branch.  The regularized field \(\Res(u,v)\)
extends smoothly across the diagonal, and the next section computes
its diagonal value \(\Res(t,t)\), the Schwarzian residue.

\section{The diagonal finite part and the Schwarzian formula}
We now compute the diagonal value of the regularized field in
Theorem~\ref{thm:finitepart} and prove the Schwarzian identity in
Theorem~\ref{thm:schwarzian}.
For a local diffeomorphism $f=f(t)$, we use the convention
\[
        \Sch{f}{t}
        =
        \left(\frac{f''}{f'}\right)'
        -
        \frac12
        \left(\frac{f''}{f'}\right)^2=\frac{f'''}{f'}-\frac{3}{2}\left(\frac{f''}{f'}\right)^2
\]
for the Schwarzian derivative.  We shall use the chain rule
\[
        \Sch{f\circ g}{s}
        =
        \Sch{f}{t}\big|_{t=g(s)}\,g'(s)^2
        +
        \Sch{g}{s}.
\]
For background on the Schwarzian derivative and its role in projective
differential geometry, see
Ovsienko--Tabachnikov~\cite{OvsienkoTabachnikov}.

\begin{proof}[Proof of the diagonal formula and the Schwarzian identity]
Write
\(d=v-u.\)
From the preceding factorization we have
\[
        \Jac(u,u+d)=d\,Q(u,d),
\]
where $Q$ is smooth and nonzero near $d=0$.  
We compute the diagonal value of the regularized field by expanding
\(G(u,d):=\log|Q(u,d)|\)
to second order in $d$.

Recall that
\[
        A(u,u+d)=d^2\alpha(u,d),
        \qquad
        B(u,u+d)=d^2\beta(u,d),
\]
and
\(Q(u,d)=-\alpha(u,d)\beta(u,d).\)
Using the definitions of $A$ and $B$ and expanding $h(u+d)$ and $h'(u+d)$ at
$u$, we first obtain
\[
        A(u,u+d)
        =
        \frac12h''(u)d^2
        +
        \frac16h'''(u)d^3
        +
        \frac1{24}h^{(4)}(u)d^4
        +
        O(d^5),
\]
and
\[
        B(u,u+d)
        =
        \frac12h''(u)d^2
        +
        \frac13h'''(u)d^3
        +
        \frac18h^{(4)}(u)d^4
        +
        O(d^5).
\]
Dividing these expansions by $d^2$ gives
\[
        \alpha(u,d)
        =
        \frac12 h''(u)
        +
        \frac16 h'''(u)d
        +
        \frac1{24}h^{(4)}(u)d^2
        +
        O(d^3),
\]
and
\[
        \beta(u,d)
        =
        \frac12 h''(u)
        +
        \frac13 h'''(u)d
        +
        \frac18h^{(4)}(u)d^2
        +
        O(d^3).
\]

Set
\(q(u):=\frac{h'''(u)}{h''(u)}.\)
Then
\[
        Q(u,d)
        =
        -\frac14 h''(u)^2
        \left[
        1+q(u)d+
        \left(
            \frac{h^{(4)}(u)}{3h''(u)}
            +
            \frac29q(u)^2
        \right)d^2
        +
        O(d^3)
        \right].
\]
After shrinking the neighborhood if necessary, $h''(u)\neq0$ and the bracket
is positive, since it is equal to $1$ at $d=0$.  Taking logarithms and using
\(\log(1+X)=X-\frac12X^2+O(X^3)\)
gives
\[
        G(u,d)=\log|Q(u,d)|
        =
        c(u)+q(u)d+r(u)d^2+O(d^3),
\]
where
\(c(u)=\log\left(\frac14 h''(u)^2\right)\)
and
\[
        r(u)
        =
        \left(
            \frac{h^{(4)}(u)}{3h''(u)}
            +
            \frac29q(u)^2
        \right)
        -
        \frac12q(u)^2
        =
        \frac{h^{(4)}(u)}{3h''(u)}
        -
        \frac5{18}q(u)^2.
\]

Since
\(\log|\Jac(u,u+d)|=\log|d|+G(u,d)\)
and the contribution of \(\log|d|\) to the mixed derivative is exactly
\(1/d^2\), it follows that, for \(d\neq0\), the regularized field is
given by
\[
        \Res(u,u+d)=\partial_u\partial_vG(u,d).
\]

Using again the chain-rule identities from Section~3, this becomes
\[
        \Res(u,u+d)
        =
        (\widehat{\partial}_u-\widehat{\partial}_d)
        \widehat{\partial}_dG(u,d).
\]
Since $G$ is smooth up to $d=0$, the diagonal value is obtained by setting
$d=0$:
\[
        \Res(t,t)
        =
        \left.
        (\widehat{\partial}_u-\widehat{\partial}_d)
        \widehat{\partial}_dG(u,d)
        \right|_{u=t,\,d=0}.
\]
From
\[
        G(u,d)=c(u)+q(u)d+r(u)d^2+O(d^3),
\]
we get
\[
        \widehat{\partial}_dG(u,d)
        =
        q(u)+2r(u)d+O(d^2).
\]
Hence
\[
        \left.
        (\widehat{\partial}_u-\widehat{\partial}_d)
        \widehat{\partial}_dG(u,d)
        \right|_{d=0}
        =
        q'(u)-2r(u).
\]
Since
\(q'(t) = \frac{h^{(4)}(t)}{h''(t)} - q(t)^2,\)
we obtain
\[
\begin{aligned}
        \Res(t,t)
        &=
        q'(t)-2r(t)                                               \\
        &=
        q'(t)
        -
        \frac{2h^{(4)}(t)}{3h''(t)}
        +
        \frac59q(t)^2                                             \\
        &=
        \frac{h^{(4)}(t)}{3h''(t)}
        -
        \frac49q(t)^2                                             \\
        &=
        \frac13q'(t)-\frac19q(t)^2.
\end{aligned}
\]
The third line is the diagonal formula in
Theorem~\ref{thm:finitepart}, while the last line is the form needed
for the Schwarzian interpretation.

We continue to work near \(t_0\) under the hypothesis
\(h''(t_0)\neq0\).  After restricting to a smaller interval about
\(t_0\), we may assume that \(h''(t)\neq0\) throughout the interval.
By the computation in Section~2,
\[
        \det(\env_t(t),\env_{tt}(t))=h''(t)^2\neq0.
\]
Thus the envelope is equi-affinely nondegenerate on this interval.
Let \(\sigma\) denote its equi-affine arclength parameter, defined by
\[
        \dd\sigma
        =
        |\det(\env_t(t),\env_{tt}(t))|^{1/3}\,\dd t.
\]
Then
\[
        \dd\sigma
        =
        |h''(t)|^{2/3}\,\dd t.
\]
Since \(h''\) is nowhere zero on the interval, it has fixed sign, and
therefore
\[
        \frac{\sigma''}{\sigma'}
        =
        \frac23\frac{h'''}{h''}
        =
        \frac23q.
\]

Therefore
\[
        \Sch{\sigma}{t}
        =
        \left(\frac{\sigma''}{\sigma'}\right)'
        -
        \frac12
        \left(\frac{\sigma''}{\sigma'}\right)^2
        =
        \frac23q'
        -
        \frac29q^2.
\]
Hence
\[
        \frac12\Sch{\sigma}{t}
        =
        \frac13q'
        -
        \frac19q^2
        =
        \Res(t,t).
\]
This completes the proof of
Theorem~\ref{thm:finitepart} and proves the Schwarzian identity in
Theorem~\ref{thm:schwarzian}.
\end{proof}

\section{Equi-affine curvature interpretation}

We now interpret the Schwarzian residue formula in the equi-affine
geometry of the envelope.  For the general background on equi-affine
differential geometry, see Nomizu--Sasaki~\cite{NomizuSasaki}; for the
plane-curve setting, see Izumiya--Sano~\cite{IzumiyaSano}.

For an equi-affine arclength parametrized plane curve \(\gamma(\sigma)\),
we choose the orientation so that
\(\det(\gamma_\sigma,\gamma_{\sigma\sigma})=1\).
With our sign convention, the equi-affine curvature \(\kaff\) is defined by
\(\gamma_{\sigma\sigma\sigma}+\kaff(\sigma)\gamma_\sigma=0\).
Equivalently, if \(T=\gamma_\sigma\) and \(N=\gamma_{\sigma\sigma}\), then
the equi-affine Frenet equations are \(T_\sigma=N\) and
\(N_\sigma=-\kaff(\sigma)T\).

For the envelope considered here, \(\gamma_t=-h''(t)(1,t)\).
On the interval under consideration, \(h''(t)\neq0\), or equivalently,
the envelope is equi-affinely nondegenerate.  Hence its tangent direction
is represented by \((1,t)\), so \(t\) is precisely the slope of the
tangent line to the envelope.  Let \(\sigma\) be the equi-affine
arclength parameter introduced in Section~4.  Since
\(\frac{\dd\sigma}{\dd t}=|h''(t)|^{2/3}>0,\)
we may locally regard \(t\) as a function of \(\sigma\).

For completeness, we record the short argument, which also makes our
sign convention explicit.

\begin{lemma}[Equi-affine curvature and Schwarzian derivative]
Let $\gamma$ be parametrized by equi-affine arclength $\sigma$, and suppose
that its tangent line is nowhere vertical.  Let $t=t(\sigma)$ denote the slope
of this tangent line.  With the above curvature convention,
\[
        \kaff(\sigma)
        =
        \frac12\Sch{t}{\sigma}.
\]
Equivalently, for the inverse function $\sigma=\sigma(t)$,
\[
        \frac12\Sch{\sigma}{t}
        =
        -\kaff(\sigma)
        \left(\frac{\dd\sigma}{\dd t}\right)^2.
\]
\end{lemma}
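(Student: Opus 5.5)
Write \(\gamma_\sigma=(a(\sigma),b(\sigma))\). Since the tangent line is nowhere vertical, \(a\neq0\) and the slope is \(t=b/a\). The Frenet equation \(\gamma_{\sigma\sigma\sigma}+\kaff\gamma_\sigma=0\) says that both components of \(T=\gamma_\sigma\) solve the same second-order linear equation
\[
        y''+\kaff(\sigma)\,y=0,
\]
which has no first-derivative term. The normalization \(\det(\gamma_\sigma,\gamma_{\sigma\sigma})=1\) says that the Wronskian is \(W=ab'-ba'=1\).

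The plan is to apply the classical fact that the ratio of two independent solutions of \(y''+Py=0\) has Schwarzian \(2P\). I will verify this directly rather than quote it. From \(t=b/a\) we get \(t'=W/a^2=a^{-2}\). Hence
\[
        \frac{t''}{t'}=-2\frac{a'}{a},
        \qquad
        \left(\frac{t''}{t'}\right)'=-2\frac{a''}{a}+2\left(\frac{a'}{a}\right)^2
        =2\kaff+2\left(\frac{a'}{a}\right)^2,
\]
using \(a''=-\kaff a\). Subtracting \(\tfrac12(t''/t')^2=2(a'/a)^2\) gives \(\Sch{t}{\sigma}=2\kaff\), which is the first identity.

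For the inverse form, I apply the Schwarzian chain rule recorded in Section~4 to the identity map \(t\circ\sigma=\mathrm{id}\). Since \(\Sch{\mathrm{id}}{t}=0\),
\[
        0=\Sch{t}{\sigma}\big|_{\sigma=\sigma(t)}\,\sigma'(t)^2+\Sch{\sigma}{t}.
\]
Therefore \(\tfrac12\Sch{\sigma}{t}=-\kaff(\sigma)\,(\dd\sigma/\dd t)^2\).

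The main obstacle is bookkeeping of signs and orientation. The Wronskian value must be exactly \(+1\), or at least a constant; a sign flip would not change the Schwarzian anyway, since only \(a^{-2}\) enters. I also need \(t\) to be a local diffeomorphism of \(\sigma\), which holds because \(t'=a^{-2}\neq0\).

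For the envelope itself, I must check compatibility with the orientation convention: the slope of \(\gamma\) is indeed the parameter \(t\), and \(\dd\sigma/\dd t>0\) was arranged in Section~4. If the chosen orientation forces reversing \(\sigma\), I will note that the Schwarzian is unchanged under the affine change \(\sigma\mapsto-\sigma\) and that \(\kaff\) is also invariant under it. Combined with the identity \(\Res(t,t)=\tfrac12\Sch{\sigma}{t}\) proved in Section~4, this lemma yields the curvature form of Theorem~\ref{thm:schwarzian}.
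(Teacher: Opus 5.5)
Your proposal is correct and follows the paper's proof essentially line by line. Both components of \(\gamma_\sigma\) solve \(z''+\kaff z=0\) with Wronskian \(1\); the relation \(t_\sigma=a^{-2}\) gives \(t_{\sigma\sigma}/t_\sigma=-2a'/a\) and hence \(\Sch{t}{\sigma}=2\kaff\); and the inverse form follows from the Schwarzian chain rule applied to an identity composition. The differences are cosmetic: you compose as \(t\circ\sigma\) in the variable \(t\) rather than the paper's \(\sigma\circ t\) in the variable \(\sigma\), which gives the second formula without rearranging. Your orientation remarks are harmless but unnecessary, since for the envelope \(\det(\gamma_\sigma,\gamma_{\sigma\sigma})=(\dd t/\dd\sigma)^3h''^2=1\) holds automatically.
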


\begin{proof}
Set \(T=\gamma_\sigma=(T_1,T_2)\).  By our curvature convention,
\(T_{\sigma\sigma}+\kaff(\sigma)T=0\), so both \(T_1\) and \(T_2\)
satisfy \(z_{\sigma\sigma}+\kaff(\sigma)z=0\).  The Wronskian of
\(T_1\) and \(T_2\) is
\[
        T_1(T_2)_\sigma-T_2(T_1)_\sigma
        =
        \det(T,T_\sigma)
        =
        \det(\gamma_\sigma,\gamma_{\sigma\sigma})
        =
        1,
\]
so they are linearly independent.

Since the tangent line is nowhere vertical, \(T_1\neq0\), and its slope is
\(t=T_2/T_1\).  Hence
\[
        t_\sigma
        =
        \frac{T_1(T_2)_\sigma-T_2(T_1)_\sigma}{T_1^2}
        =
        \frac{1}{T_1^2}
        >0,
\]
so \(t\) is a local parameter.  

The relation between ratios of solutions of second-order linear equations
and the Schwarzian derivative is classical; see, for example,
Hille~\cite[Theorem~10.1.1]{Hille}.  In the present setting, the required
identity follows directly from \(t_\sigma=T_1^{-2}\), which implies
\(t_{\sigma\sigma}/t_\sigma=-2(T_1)_\sigma/T_1\).  Hence
\[
\begin{aligned}
        \Sch{t}{\sigma}
        &=
        \left(
        -2\frac{(T_1)_\sigma}{T_1}
        \right)_\sigma
        -
        2
        \left(
        \frac{(T_1)_\sigma}{T_1}
        \right)^2                                      \\
        &=
        -2\frac{(T_1)_{\sigma\sigma}}{T_1}
        =
        2\kaff(\sigma),
\end{aligned}
\]
where the last equality uses
\((T_1)_{\sigma\sigma}+\kaff(\sigma)T_1=0\).
This proves the first formula.

For the inverse function \(\sigma=\sigma(t)\), the Schwarzian chain rule gives
\[
        0
        =
        \Sch{\sigma\circ t}{\sigma}
        =
        \Sch{\sigma}{t}
        \left(\frac{\dd t}{\dd\sigma}\right)^2
        +
        \Sch{t}{\sigma}.
\]
Combining this identity with the first formula proves the second.
\end{proof}

Combining the Schwarzian identity proved in Section~4 with the preceding
lemma gives the equi-affine curvature formula in Theorem~\ref{thm:schwarzian}:
\[
        \Res(t,t)
        =
        -\kaff(\sigma)
        \left(\frac{\dd\sigma}{\dd t}\right)^2.
\]
This completes the proof of
Theorem~\ref{thm:schwarzian}.

\section{Transformation law}

The diagonal finite part depends on the parameter used to label the
tangent lines.  
Let \(\widetilde I\subset\mathbb R\) be an interval, and let
\(\varphi:\widetilde I\longrightarrow I\)
be a local diffeomorphism.  Set \(t=\varphi(s)\).  The same local
family of lines, now labeled by \(s\), is
\[
        \widetilde L_s:=L_{\varphi(s)}:
        \qquad
        y=\varphi(s)x+h(\varphi(s)).
\]
Here \(s\) is merely a new parameter labeling the lines.  In the fixed
affine coordinates \((x,y)\), the slope of \(\widetilde L_s\) is
\(\varphi(s)\), not \(s\) in general.  Consequently, the slope-parameter
identity
\(\Res(t,t)=\frac12\Sch{\sigma}{t}\)
cannot in general be applied with \(s\) in place of \(t\).  We therefore
derive the transformation law directly from the reparametrized
intersection map.

For \(r\neq s\), the corresponding intersection map is
\[
        \widetilde\Psi(r,s)
        :=
        \Psi\bigl(\varphi(r),\varphi(s)\bigr),
\]
and we set
\[
        \widetilde{\Ob}(r,s)
        :=
        \partial_r\partial_s
        \log\left|J_{\widetilde\Psi}(r,s)\right|.
\]

\begin{proposition}[Change of tangent parameter]
\label{prop:change-parameter}
Let \(s_0\in\widetilde I\) satisfy
\(h''\bigl(\varphi(s_0)\bigr)\neq0.\)
Then, for \(r\neq s\) near \((s_0,s_0)\),
\[
        \widetilde{\Ob}(r,s)
        =
        \frac{1}{(s-r)^2}
        +
        \widetilde{\Res}(r,s),
\]
where \(\widetilde{\Res}\) extends smoothly across the diagonal near
\((s_0,s_0)\).
Its diagonal value satisfies
\[
        \widetilde{\Res}(s,s)
        =
        \Res\bigl(\varphi(s),\varphi(s)\bigr)\varphi'(s)^2
        +
        \frac16\Sch{\varphi}{s}.
\]
\end{proposition}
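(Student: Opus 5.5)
The plan is to reduce everything to the chain rule for Jacobians and one classical divided-difference expansion. Since \(\widetilde\Psi=\Psi\circ(\varphi\times\varphi)\), we have
\[
        J_{\widetilde\Psi}(r,s)=\Jac\bigl(\varphi(r),\varphi(s)\bigr)\,\varphi'(r)\varphi'(s).
\]
Taking \(\log|\cdot|\) and applying \(\partial_r\partial_s\), the separable factor \(\log|\varphi'(r)|+\log|\varphi'(s)|\) is annihilated. This gives
\[
        \widetilde{\Ob}(r,s)=\Ob\bigl(\varphi(r),\varphi(s)\bigr)\varphi'(r)\varphi'(s)
\]
for \(r\neq s\) near \((s_0,s_0)\). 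We use that \(\varphi\) is a local diffeomorphism, so \(\varphi(r)\neq\varphi(s)\) locally. The point \((\varphi(s_0),\varphi(s_0))\) is a diagonal point with \(h''\neq0\), so the Proposition on smooth renormalization applies there. We obtain
\[
        \widetilde{\Ob}(r,s)=\frac{\varphi'(r)\varphi'(s)}{(\varphi(s)-\varphi(r))^2}+\Res\bigl(\varphi(r),\varphi(s)\bigr)\varphi'(r)\varphi'(s).
\]
The second term is smooth across the diagonal. Its diagonal value is \(\Res(\varphi(s),\varphi(s))\varphi'(s)^2\).

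It remains to treat the first term. We define
\[
        K(r,s):=\frac{\varphi'(r)\varphi'(s)}{(\varphi(s)-\varphi(r))^2}-\frac1{(s-r)^2}
\]
and show that \(K\) extends smoothly with \(K(s,s)=\frac16\Sch{\varphi}{s}\). Then \(\widetilde\Res:=\Res(\varphi(r),\varphi(s))\varphi'(r)\varphi'(s)+K\) gives the claimed decomposition and diagonal value.

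For smoothness, I would note the identity \(K=\partial_r\partial_s\log\left|\frac{\varphi(s)-\varphi(r)}{s-r}\right|\). This holds because \(\partial_r\partial_s\log|\varphi(s)-\varphi(r)|=\varphi'(r)\varphi'(s)/(\varphi(s)-\varphi(r))^2\), and \(\partial_r\partial_s\log|s-r|=1/(s-r)^2\). The divided difference \(D(r,s)=(\varphi(s)-\varphi(r))/(s-r)\) extends smoothly, by the same Hadamard-lemma argument as in Section~3, with \(D(s,s)=\varphi'(s)\neq0\). Hence \(\log|D|\) is smooth near the diagonal, and so is \(K\).

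For the diagonal value, I would mimic Section~4: write \(r=u\), \(s=u+d\), and expand \(\log|D(u,u+d)|\) to second order in \(d\). Set \(p=\varphi''/\varphi'\). The expansion is
\[
        D=\varphi'\Bigl[1+\tfrac12p\,d+\tfrac16\tfrac{\varphi'''}{\varphi'}d^2+O(d^3)\Bigr],
\]
which gives
\[
        \log|D|=\log|\varphi'|+\tfrac12p\,d+\Bigl(\tfrac16\tfrac{\varphi'''}{\varphi'}-\tfrac18p^2\Bigr)d^2+O(d^3).
\]
Applying \((\widehat\partial_u-\widehat\partial_d)\widehat\partial_d\) at \(d=0\) gives \(\tfrac12p'-2\bigl(\tfrac16\tfrac{\varphi'''}{\varphi'}-\tfrac18p^2\bigr)\). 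Using \(p'=\varphi'''/\varphi'-p^2\), this equals
\[
        \tfrac16\tfrac{\varphi'''}{\varphi'}-\tfrac14p^2=\tfrac16\Bigl(\tfrac{\varphi'''}{\varphi'}-\tfrac32p^2\Bigr)=\tfrac16\Sch{\varphi}{s}.
\]

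The main obstacle is purely bookkeeping: keeping track of which variable the diagonal value is evaluated at, and getting the Taylor coefficients right in the expansion of \(\log|D|\). The rest is formal.

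As an optional consistency check, the result can be compared with Theorem~\ref{thm:schwarzian} and the Schwarzian chain rule. Both sides of the identity for \(\widetilde\Res(s,s)\) transform compatibly under composition of reparametrizations, since \(\frac16\Sch{\cdot}{\cdot}\) obeys the same cocycle law.
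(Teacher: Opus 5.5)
Your proposal is correct and follows essentially the same route as the paper: the Jacobian chain rule, the decomposition of \(\widetilde{\Ob}\) into the pulled-back pole plus \(\varphi'(r)\varphi'(s)\Res\bigl(\varphi(r),\varphi(s)\bigr)\), and smoothness of the correction via \(C_\varphi=\partial_r\partial_s\log\bigl|(\varphi(s)-\varphi(r))/(s-r)\bigr|\). The only difference is in computing \(C_\varphi(r,r)=\frac16\Sch{\varphi}{r}\): you expand \(\log|D|\) and apply \((\widehat{\partial}_u-\widehat{\partial}_d)\widehat{\partial}_d\) as in Section~4, whereas the paper expands \(\varphi'(r)\varphi'(r+\delta)/(\varphi(r+\delta)-\varphi(r))^2\) directly, and both computations give the same value.
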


\begin{proof}
By the chain rule,
\[
        J_{\widetilde\Psi}(r,s)
        =
        \Jac\bigl(\varphi(r),\varphi(s)\bigr)
        \varphi'(r)\varphi'(s).
\]
Hence
\[
\begin{aligned}
        \log\left|J_{\widetilde\Psi}(r,s)\right|
        &=
        \log\left|
        \Jac\bigl(\varphi(r),\varphi(s)\bigr)
        \right|                               
        +
        \log|\varphi'(r)|
        +
        \log|\varphi'(s)|.
\end{aligned}
\]
The last two terms have zero mixed derivative, and therefore
\[
        \widetilde{\Ob}(r,s)
        =
        \varphi'(r)\varphi'(s)
        \Ob\bigl(\varphi(r),\varphi(s)\bigr).
\]
Using
\[
        \Ob(u,v)
        =
        \frac{1}{(v-u)^2}
        +
        \Res(u,v),
\]
we obtain
\[
\begin{aligned}
        \widetilde{\Ob}(r,s)
        &=
        \frac{\varphi'(r)\varphi'(s)}
        {\bigl(\varphi(s)-\varphi(r)\bigr)^2}
        +
        \varphi'(r)\varphi'(s)
        \Res\bigl(\varphi(r),\varphi(s)\bigr).
\end{aligned}
\]
The first term is the \(dr\wedge ds\)-coefficient of the pullback,
under \((r,s)\mapsto(\varphi(r),\varphi(s))\), of the universal-pole
term
\[
        \frac{1}{(v-u)^2}\,du\wedge dv
\]
in the obstruction form.
To write the transformed obstruction
coefficient in the standard form in the new parameters, we subtract
the pole \(1/(s-r)^2\).  The resulting correction is
\[
        C_\varphi(r,s)
        :=
        \frac{\varphi'(r)\varphi'(s)}
        {\bigl(\varphi(s)-\varphi(r)\bigr)^2}
        -
        \frac{1}{(s-r)^2}.
\]
Then, for \(r\neq s\),
\[
\begin{aligned}
        \widetilde{\Ob}(r,s)
        &=
        \frac{1}{(s-r)^2}
        +
        C_\varphi(r,s)
        +
        \varphi'(r)\varphi'(s)
        \Res\bigl(\varphi(r),\varphi(s)\bigr).
\end{aligned}
\]

At the logarithmic level, the reparametrized singular term is
\(\log|\varphi(s)-\varphi(r)|\), whereas the standard singular term in
the new parameters is \(\log|s-r|\).  Consequently,
\[
\begin{aligned}
        C_\varphi(r,s)
        &=
        \partial_r\partial_s
        \left(
        \log|\varphi(s)-\varphi(r)|
        -
        \log|s-r|
        \right)                                      \\
        &=
        \partial_r\partial_s
        \log\left|
        \frac{\varphi(s)-\varphi(r)}{s-r}
        \right|.
\end{aligned}
\]
The divided difference
\(\frac{\varphi(s)-\varphi(r)}{s-r}\)
extends smoothly to the diagonal with value \(\varphi'(r)\neq0\).
After shrinking the neighborhood if necessary, it remains nonzero.
Hence its logarithm, and therefore \(C_\varphi\), extends smoothly to
the diagonal.

It remains to compute the diagonal value of \(C_\varphi\).  Set
\(\delta=s-r, \;a=\frac{\varphi''(r)}{\varphi'(r)}, \; b=\frac{\varphi'''(r)}{\varphi'(r)}.\)
Expanding at \(r\), we obtain
\[
\begin{aligned}
        \frac{\varphi'(r)\varphi'(r+\delta)}
        {\bigl(\varphi(r+\delta)-\varphi(r)\bigr)^2}
        &=
        \frac{1}{\delta^2}
        \frac{
        1+a\delta+\frac12b\delta^2+O(\delta^3)
        }{
        \left(
        1+\frac12a\delta+\frac16b\delta^2+O(\delta^3)
        \right)^2
        }                                                   \\
        &=
        \frac{1}{\delta^2}
        +
        \frac16
        \left(
        b-\frac32a^2
        \right)
        +
        O(\delta)                                           \\
        &=
        \frac{1}{\delta^2}
        +
        \frac16\Sch{\varphi}{r}
        +
        O(\delta).
\end{aligned}
\]
Since \(C_\varphi(r,r+\delta)\) is obtained by subtracting
\(1/\delta^2\), it follows that
\[
        C_\varphi(r,r)=\frac16\Sch{\varphi}{r}.
\]

Returning to the decomposition above, for \(r\neq s\) we have
\[
        \widetilde{\Ob}(r,s)-\frac{1}{(s-r)^2}
        =
        C_\varphi(r,s)
        +
        \varphi'(r)\varphi'(s)
        \Res\bigl(\varphi(r),\varphi(s)\bigr).
\]
The first term on the right extends smoothly to the diagonal by the
preceding argument.  The second term also extends smoothly, since
\(\Res\) is smooth near
\(\bigl(\varphi(s_0),\varphi(s_0)\bigr)\).  Hence their sum gives the
smooth extension \(\widetilde{\Res}\).  Evaluating this extension on
the diagonal and using the formula for \(C_\varphi(r,r)\), we obtain
\[
        \widetilde{\Res}(r,r)
        =
        \Res\bigl(\varphi(r),\varphi(r)\bigr)\varphi'(r)^2
        +
        \frac16\Sch{\varphi}{r}.
\]
This is the stated transformation law.
\end{proof}

\begin{remark}
If
\(P(t):=6\Res(t,t), \; \widetilde P(s):=6\widetilde{\Res}(s,s),\)
then Proposition~\ref{prop:change-parameter} becomes
\[
        \widetilde P(s)
        =
        P\bigl(\varphi(s)\bigr)\varphi'(s)^2
        +
        \Sch{\varphi}{s}.
\]
Thus \(P\) has the transformation law of a projective connection
coefficient in this normalization.

Combining Proposition~\ref{prop:change-parameter} with
\(\Res(t,t)=\frac12\Sch{\sigma}{t}\)
and using the Schwarzian chain rule
\[
        \Sch{\sigma\circ\varphi}{s}
        =
        \left.
        \Sch{\sigma}{t}
        \right|_{t=\varphi(s)}
        \varphi'(s)^2
        +
        \Sch{\varphi}{s},
\]
we obtain
\[
\begin{aligned}
        \widetilde{\Res}(s,s)
        &=
        \frac12
        \left.
        \Sch{\sigma}{t}
        \right|_{t=\varphi(s)}
        \varphi'(s)^2
        +
        \frac16\Sch{\varphi}{s}                         \\
        &=
        \frac12\Sch{\sigma\circ\varphi}{s}
        -
        \frac13\Sch{\varphi}{s}.
\end{aligned}
\]
In particular, if \(\varphi\) is a M\"obius reparametrization, then
\(\Sch{\varphi}{s}=0\), and hence
\[
        \widetilde{\Res}(s,s)
        =
        \frac12\Sch{\sigma\circ\varphi}{s}.
\]
Moreover,
\[
        \widetilde{\Res}(s,s)\dd s ^2
        =
        \Res(t,t)\dd t ^2,
        \qquad
        t=\varphi(s).
\]
\end{remark}

\section{Examples}

\begin{example}[Parabola]
Let
\(h(t)=-\frac12t^2.\)
Then
\(A(u,v)=-\frac12(v-u)^2, \; B(u,v)=-\frac12(v-u)^2.\)
Hence
\[
        \Jac(u,v)
        =
        -\frac{A(u,v)B(u,v)}{(v-u)^3}
        =
        -\frac{v-u}{4}.
\]
Therefore
\[
        \log|\Jac(u,v)|=\log|v-u|-\log 4,
        \qquad
        \Ob(u,v)=\frac1{(v-u)^2}.
\]
Thus
\(\Res(u,v)\equiv0.\)
The parabola is the flat model for the regularized obstruction:
after subtracting the universal pole, the regularized field vanishes
identically.  Equivalently, its equi-affine curvature is zero.
\end{example}

\begin{example}[Circle]
For the unit circle, in slope coordinates one may take
\(h(t)=\sqrt{1+t^2}.\)
Then
\(h''(t)=(1+t^2)^{-3/2}, \;\frac{d\sigma}{dt}=|h''(t)|^{2/3}=(1+t^2)^{-1}.\)
Thus
\(\sigma(t)=\arctan t\)
up to an additive constant.  The unit circle has equi-affine curvature
\(\kappa_{\mathrm{aff}}=1\)
with our sign convention.  Hence the equi-affine curvature formula for the residue gives
\[
        \Res(t,t)
        =
        -\kappa_{\mathrm{aff}}
        \left(\frac{d\sigma}{dt}\right)^2
        =
        -\frac{1}{(1+t^2)^2}.
\]
Equivalently,
\(\Res(t,t)=\frac12\Sch{\arctan t}{t}.\)
Thus a constant equi-affine curvature becomes a nonconstant coefficient when
written in the tangent slope parameter $t$.
\end{example}

\begin{example}[A quartic perturbation of the parabola]
The parabola is the flat model, for which the regularized field
vanishes identically.  To see
how a nonzero residue appears to first order under a perturbation of this
model, consider
\[
        h(t)=-\frac12t^2+\frac{\varepsilon}{24}t^4.
\]
At \(t=0\), we have \(h''(0)=-1\), \(h'''(0)=0\), and
\(h^{(4)}(0)=\varepsilon\).  Hence the diagonal formula gives
\[
        \Res(0,0)=-\frac{\varepsilon}{3}.
\]
Thus, at \(t=0\), the quartic perturbation is detected linearly by the
Schwarzian residue.

\end{example}

\begin{example}[A cubic model with a cuspidal envelope]
Let \(h(t)=\frac16t^3\).  Its envelope is
\(\gamma(t)=\left(-\frac12t^2,-\frac13t^3\right)\), which has a cusp
at \(t=0\).  On any interval not containing \(0\), \(h''(t)=t\) does
not vanish, so the envelope is equi-affinely nondegenerate.  The
diagonal formula gives
\(\Res(t,t)=-\frac4{9t^2}\).  For \(t>0\), we have
\(\dd\sigma=t^{2/3}\dd t\) and
\(\sigma(t)=\frac35t^{5/3}\), and hence
\(\frac12\Sch{\sigma}{t}=-\frac4{9t^2}\).
Thus the Schwarzian residue diverges to \(-\infty\) as \(t\to0\),
reflecting the vanishing of the equi-affine arclength density as the
cusp is approached.
\end{example}

\section{Concluding remarks}

The universal double pole
\(\frac{1}{(v-u)^2}\)
accounts for the local failure of the Samuelson condition for tangent
Lagrangian \(2\)-webs near an equi-affinely nondegenerate point of the
envelope.  In the Hess-connection interpretation, this pole is the
leading singularity of the scalar coefficient of the curvature
\(2\)-form as the diagonal is approached.  After subtracting this pole,
the resulting regularized field \(\Res(u,v)\) extends smoothly across
the diagonal.  Its diagonal value is one half of the Schwarzian
derivative of the equi-affine arclength parameter with respect to the
tangent slope and, equivalently, is given in terms of the equi-affine
curvature by
\[
        \Res(t,t)
        =
        -\kappa_{\mathrm{aff}}(\sigma)
        \left(\frac{\dd\sigma}{\dd t}\right)^2.
\]

Thus the Samuelson obstruction admits the schematic decomposition
\[
        \text{Samuelson obstruction}
        =
        \text{universal double pole}
        +
        \text{regularized field}.
\]

The curvature--Schwarzian relation used above is classical.  In the
present setting, however, the corresponding Schwarzian quantity arises
as the diagonal value of the regularized field obtained from the
Samuelson obstruction.  The transformation law derived in Section~6
describes how this residue changes under a local reparametrization of
the tangent-line parameter.  It would be interesting to understand
whether higher-order terms in the diagonal expansion of the
regularized field \(\Res(u,u+d)\) carry further affine or projective
differential-geometric information about the envelope.

\enlargethispage{\baselineskip}

\end{document}